\numberwithin{equation}{section}
\theoremstyle{plain}
\newtheorem{theorem}{Theorem}
\newtheorem{lemma}{Lemma}
\newtheorem{problem}{Problem}
\theoremstyle{definition}
\patchcmd{\@settitle}{\uppercasenonmath\@title}{}{}{}
\patchcmd{\@setauthors}{\MakeUppercase}{}{}{}
\patchcmd{\section}{\scshape}{}{}{}
\begin{document}

\title
[{Solutions to some problems on unique representation bases}]
{Solutions to some problems on unique representation bases}

\author
[Y. Ding] 
{Yuchen Ding$^*$}

\address{(Yuchen Ding) School of Mathematical Sciences,  Yangzhou University, Yangzhou 225002, People's Republic of China}
\email{ycding@yzu.edu.cn}
\thanks{$^*$ ORCID:  0000-0001-7016-309X}

\keywords{unique representation bases, Sidon sets, growth of sets, inductive process}
\subjclass[2010]{11B13, 11B34, 11B05}

\begin{abstract}
In this note, three 2003 problems of Nathanson and two 2007 problems of Chen on unique representation bases for the integers are resolved.
\end{abstract}
\maketitle

\section{Introduction}
Let $\mathbb{N}$ be the set of natural numbers and $A$ a subset of $\mathbb{N}$. For any $n\in \mathbb{N}$, let
$$
r_A(n)=\#\big\{(a,a'):n=a+a',a\le a',a,a'\in A\big\}.
$$
One of the most attractive unsloved problems in additive combinatorics is the Erd\H os-Tur\'an conjecture on additive bases \cite{erdos-turan} posed in 1941, which states that if $r_A(n)\ge 1$ for all sufficiently large $n$, then $r_A(n)$ cannot be unbounded. 

However, things get changed if one replaces the natural set $\mathbb{N}$ by the integer set $\mathbb{Z}$. A set $A$ is called a {\it unique representation basis} of $\mathbb{Z}$ if $r_A(n)=1$ for all $n\in \mathbb{Z}$. In a fairly original article, Nathanson \cite{Nathanson} showed that there is a unique representation basis of $\mathbb{Z}$. Nathanson further proved that a unique representation basis can be arbitrarily sparse.  In addition, Nathanson proved that every unique representation basis $A$ for the integers satisfies $A(-x,x)\le \sqrt{8x}$, where $A(y,x)=\big|A\cap [y,x]\big|$. Nathanson himself constructed a unique representation basis $A$ such that 
$$
A(-x,x)\ge (2/\log 5)\log x+0.63.
$$
Based on the results above, Nathanson asked whether there exists a number $\theta<1/2$ so that $A(-x,x)\le x^\theta$ for every unique representation basis $A$ and for all sufficiently large $x$? 
Nathanson \cite{Nathanson} also posed the following three open problems:

\begin{problem}\label{problem1}
For each number $c>2/\log 5$, does there exist a unique representation basis $A$ such that $A(-x,x)>c\log x$ for all sufficiently large $x$?
\end{problem}

\begin{problem}\label{problem2}
Does there exist a unique representation basis $A$ such that
$$
\lim_{x\rightarrow\infty}\frac{A(-x,x)}{\log x}=\infty?
$$
\end{problem}

\begin{problem}\label{problem3}
Does there exist a number $\theta>0$ and a unique representation $A$ such that $A(-x,x)\ge x^{\theta}$ for all sufficiently large $x$?
\end{problem}

As was already shown by Chen \cite{chen}, the classical results on finite Sidon sets (see e.g. \cite{Sidon}) can be used to construct unique representation basis $A$ with large growth of the number of its elements infinitely often. Precisely, Chen proved that for any $\varepsilon>0$ there is a unique representation basis $A$ for the integers such that $A(-x,x)\ge x^{1/2-\varepsilon}$ for infinitely many positive integers $x$, which gave a negative answer to the problem of Nathanson mentioned above.
Chen \cite{chen} then posed some open problems. Two of them are as follows: 

\begin{problem}\label{problem4}
Does there exist a real number $c>0$ and a unique representation basis $A$ such that
$A(-x,x)\ge c\sqrt{x}$ for infinitely many positive integers $x$?
\end{problem}

\begin{problem}\label{problem5}
Does there exist a real number $c>0$ and a unique representation basis $A$ such that
$A(-x,x)\ge c\sqrt{x}$ for all real numbers $x\ge 1$?
\end{problem}

In this article, we shall solve the Problems \ref{problem1} to \ref{problem5} above.

\begin{theorem}\label{thm1}
There exists a unique representation $A$ such that $A(-x,x)\ge \frac{1}{8}x^{1/3}$ for all sufficiently large $x$.
\end{theorem}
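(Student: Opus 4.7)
The plan is to construct $A$ as an increasing union $A = \bigcup_{k\ge 1} A_k$ of nested finite Sidon sets $A_1 \subset A_2 \subset \cdots$ in $\mathbb{Z}$, where ``Sidon'' means $r_{A_k}(n) \le 1$ for every integer $n$. Enumerate $\mathbb{Z} = \{z_1, z_2, \ldots\}$ in order of increasing absolute value, and arrange the construction so that at the end of stage $k$ both $z_k \in A_k + A_k$ and $|A_k| \ge c\, M_k^{1/3}$, where $M_k := \max_{a \in A_k}|a|$ and $c$ is a fixed positive constant. Then $A = \bigcup_k A_k$ is a Sidon set that additively covers $\mathbb{Z}$, hence a unique representation basis; moreover the density estimate at the distinguished scales $M_k$ will interpolate to a lower bound valid for \emph{all} sufficiently large $x$.

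The inductive step is the heart of the argument. Given $A_{k-1} \subset [-M_{k-1}, M_{k-1}]$ of cardinality $n_{k-1}$, fix a large constant $\lambda$ (chosen once and for all), set $T_k := \lambda M_{k-1}$, and define $A_k := A_{k-1} \cup B_k \cup B_k' \cup \{\text{one pair for }z_k\}$, where $B_k \subset [T_k, 2T_k]$ is a Sidon block of size at least $c\,(2T_k)^{1/3}$ and $B_k' \subset [-2T_k, -T_k]$ is a symmetric mirror block. The Sidon property of $A_k$ reduces to ruling out every non-trivial equation $x_1 + x_2 = x_3 + x_4$ with $x_i \in A_k$, split according to how many coordinates lie in the new high-range blocks: the all-old case is the induction hypothesis; the all-new case is handled by insisting that $B_k$ (and $B_k'$) themselves be Sidon, for which classical constructions such as Singer sets provide Sidon sets of size $\sim \sqrt{T_k}$ in the window, far more than needed; and the mixed cases (one, two, or three new coordinates), after elementary range arithmetic using $T_k \gg M_{k-1}$, impose at most $O(n_{k-1}^2 |B_k|)$ forbidden positions on the new elements, which is $o(T_k)$ once $\lambda$ is large enough. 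A greedy removal from the starting Singer-type Sidon block of size $\sim \sqrt{T_k}$ then leaves a block of size $\gg (2T_k)^{1/3}$, and the single pair representing $z_k$ is placed at a slightly larger scale by the same avoidance argument.

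Finally, to deduce the bound for \emph{all} sufficiently large $x$, note that the choice $T_k = \lambda M_{k-1}$ makes $M_k := 2T_k = 2\lambda M_{k-1}$ a geometric sequence with bounded ratio. Given large $x$, locate the unique $k$ with $M_{k-1} \le x < M_k$; then $A(-x,x) \ge |A_{k-1}| \ge c\, M_{k-1}^{1/3} \ge c\,(2\lambda)^{-1/3} x^{1/3}$, and tuning the absolute constants $c$ and $\lambda$ yields the factor $\tfrac{1}{8}$. The main technical obstacle, and the only step requiring genuine care, is the collision analysis in the inductive step: one must enumerate every shape of mixed sum equation linking $A_{k-1}$, $B_k$, and $B_k'$, and verify that the union of all forbidden positions occupies a strictly negligible fraction of the window $[T_k, 2T_k]$. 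Once this book-keeping is in place, the construction and the density bound both follow by a routine induction.
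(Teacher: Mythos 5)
Your overall architecture (nested finite sets with $r\le 1$, a low-cost pair inserted at each stage to represent the next missing integer, geometric growth of the maximal element, and an interpolation $M_{k-1}\le x<M_k$ at the end) matches the paper's. But the step that actually produces the density --- extracting a block of size $\gg T_k^{1/3}$ from a Singer-type Sidon set of size $\sim\sqrt{T_k}$ inside the window $[T_k,2T_k]$ with $T_k=\lambda M_{k-1}$ --- has a genuine gap, and it is exactly the step you flag as "the only step requiring genuine care." The constraint that must be imposed on the new block $B_k$ to keep $r_{A_k}\le 1$ includes: for all distinct $b_1,b_2\in B_k$ and $a_1,a_2\in A_{k-1}$, $b_1-b_2\ne a_2-a_1$. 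Inside a Sidon set $S$ each nonzero difference occurs at most once, so this forbids up to $|A_{k-1}|^2$ pairs of $S$; in your construction $|A_{k-1}|=\Theta(M_{k-1}^{1/3})$, so the number of forbidden pairs is $\Theta(M_{k-1}^{2/3})=\Theta\big((T_k/\lambda)^{2/3}\big)$, which for large $k$ dwarfs $|S|\sim T_k^{1/2}$. Deleting one element per bad pair can therefore erase the entire Singer block. The same failure occurs for the constraint $b_1+b_2\ne b_3+a$ (the sums $B_k+B_k$ and $A_{k-1}+B_k$ overlap in $[2T_k,2T_k+M_{k-1}]$), which forbids $\Theta(|S|\cdot|A_{k-1}|)=\Theta(T_k^{5/6})$ triples. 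Your count "$O(n_{k-1}^2|B_k|)$ forbidden positions, which is $o(T_k)$" compares the forbidden set to the \emph{window} of size $T_k$, but the pool you are selecting from is the Singer block of size only $\sqrt{T_k}$; the forbidden set can cover it entirely. (A secondary problem: if $B_k'$ is literally the mirror $-B_k$, then $b+(-b)=0$ for every $b\in B_k$ gives $r_{A_k}(0)\ge|B_k|$, so the mirror block needs its own independent construction.)

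The repair is to abandon the Singer scaffold at this step, which is what the paper does. Since the pool must be of size comparable to the window anyway, one adds the new dense elements \emph{one at a time} from the full range (the paper uses $[-N/2,N/2]$ where $N$ is the current maximal absolute value, so the maximum never grows during this phase): a single new element $n$ preserves $r\le 1$ provided $n\notin\{d_1+d_2-d_3\}\cup\{(d_4+d_5)/2\}$ over the current set $D$, a forbidden set of size at most $|D|^3+|D|^2$. This is smaller than the pool precisely while $|D|\le\tfrac12 N^{1/3}$ --- which is where the exponent $1/3$ in the theorem actually comes from, and which explains why your target $|B_k|\approx(2T_k)^{1/3}$ is so much smaller than the $\sqrt{T_k}$ a Singer set would naively offer. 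The Sidon-set route of size $\sqrt{N}$ only works when the new scale is taken enormously larger than the square of the old maximum, which destroys the geometric growth and yields only the "infinitely many $x$" conclusion of Theorem \ref{thm2}, not the "all sufficiently large $x$" conclusion of Theorem \ref{thm1}. Your final interpolation paragraph is fine once the inductive step is replaced by this element-by-element greedy.
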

Theorem \ref{thm1} answers affirmatively Problems \ref{problem1}, \ref{problem2} and \ref{problem3} from Nathanson.

\begin{theorem}\label{thm2}
Let $\varepsilon>0$ be an arbitraily small number. Then there is a unique representation basis $A$ such that
$A(-x,x)\ge \left(\frac{\sqrt{2}}{2}-\varepsilon\right)\sqrt{x}$ for infinitely many $x$.
\end{theorem}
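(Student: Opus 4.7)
The plan is to combine a classical dense Sidon set construction with Nathanson-style inductive completion, ensuring that all newly added elements stay outside a fixed window $[-2N, 2N]$. Fix $\varepsilon > 0$ and choose $\delta > 0$ so small that $\delta/\sqrt{2} < \varepsilon$. By Singer's construction (or the Erd\H{o}s--Tur\'an bound for $B_2$-sets), for infinitely many $N$ there exists a Sidon set $S \subset \{1, 2, \ldots, N\}$ with $|S| \geq (1 - \delta)\sqrt{N}$. Set $A_0 = S$ and fix an enumeration $\mathbb{Z} = \{n_1, n_2, \ldots\}$.

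Now inductively build finite Sidon sets $A_0 \subset A_1 \subset \cdots$ with $A_k \setminus S$ disjoint from $[-2N, 2N]$. Given $A_k$, if $n_k \in A_k + A_k$ take $A_{k+1} = A_k$; otherwise pick a positive integer $b$ much larger than $2N$, $\max_{a \in A_k} |a|$, and $|n_k|$, then set $c = n_k - b$ and $A_{k+1} = A_k \cup \{b, c\}$. The choice forces $c < -2N$, so both new elements lie outside $[-2N, 2N]$. The genuinely new pairwise sums are of four types: $2b$ and $2c$ (of extreme magnitude), $b + a$ for $a \in A_k$ (clustered near $b$), $c + a$ for $a \in A_k$ (clustered near $c$), and $b + c = n_k$ (moderate). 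A sufficiently large $b$ separates these ranges from each other and from the old sums in $A_k + A_k$, while $b + c = n_k$ avoids $A_k + A_k$ by hypothesis. Hence $A_{k+1}$ is Sidon. Letting $A = \bigcup_k A_k$, the enumeration ensures $\mathbb{Z} \subset A + A$, and the Sidon property then gives $r_A(n) = 1$ for every $n \in \mathbb{Z}$.

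By construction $A \cap [-2N, 2N] = S$, so at $x = 2N$,
$$
A(-x, x) = |S| \geq (1 - \delta)\sqrt{N} = \frac{1 - \delta}{\sqrt{2}}\sqrt{x} > \left(\frac{\sqrt{2}}{2} - \varepsilon\right)\sqrt{x}.
$$
This holds for each of the infinitely many admissible $N$, which yields the theorem. The main technical point is the inductive step: verifying that a single sufficiently large choice of $b$ simultaneously prevents every possible new collision (``new sum equals new sum'' and ``new sum equals old sum''). This reduces to careful but routine case analysis that separates the four types of new sums by magnitude, with the required lower bound on $b$ growing with $k$.
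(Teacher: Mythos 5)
There is a genuine gap at the very last step: a quantifier ends up in the wrong place. The theorem asks for a \emph{single} unique representation basis $A$ satisfying $A(-x,x)\ge(\sqrt{2}/2-\varepsilon)\sqrt{x}$ at infinitely many $x$. Your construction fixes one $N$ and one Sidon set $S\subset\{1,\dots,N\}$ at the outset, forces every later element outside $[-2N,2N]$, and obtains the density bound only at the single scale $x=2N$ (the inequality persists on a short interval near $2N$, but not at infinitely many integers: $A(-x,x)$ stays equal to $|S|$ plus a handful of sparse completion elements while $\sqrt{x}$ grows). The closing sentence ``this holds for each of the infinitely many admissible $N$'' produces a different basis $A_N$ for each $N$; it does not produce one basis that is dense at infinitely many scales. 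The completion pairs $\{b,c\}$ contribute far too few elements to reach $c\sqrt{x}$ again at any larger $x$.

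To repair this you must interleave: infinitely often, insert a fresh translated Sidon block at a new, much larger scale $y$, in addition to the Nathanson completion pairs. This is where the real work of the theorem lies and where your proposal is silent. Once the earlier part of $A$ is nonempty and you want to adjoin a Sidon set of size $\sim\sqrt{y}$ inside $[0,y]$, you must rule out collisions of the form $s_1+s_2=s_3+a$ with $a$ in the current set, and these are not controlled merely by taking $y$ large. The paper handles this by placing the new block inside $[y/2,(1-\varepsilon/2)y)$, so that $s_1+s_2-s_3\ge\varepsilon y/2$ exceeds every existing element, and by deleting the $O(|a^*_{2h}|)$ pairs whose differences land in $A_{2h}-A_{2h}$; the price of confining the block to an interval of length about $y/2$ is exactly the constant $\sqrt{2}/2$ in the statement. (Your single-block computation, which at $x=N$ would even give $(1-\delta)\sqrt{x}$, shows why the first block is easy and why it is misleading.) Your verification of the Nathanson completion step itself is fine; the missing idea is the mechanism for re-achieving the density at infinitely many scales within one and the same basis.
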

Theorem \ref{thm2} answers affirmatively Problem \ref{problem4} of Chen.

\begin{theorem}\label{thm3}
Let $A$ be a unique representation basis. Then
$$
\liminf_{x\rightarrow\infty}\frac{A(-x,x)}{\sqrt{x/\log x}}\le 4\sqrt{7}.
$$
\end{theorem}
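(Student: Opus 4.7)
My plan is to argue by contradiction. Suppose that $\liminf_{x\to\infty} G(x)/\sqrt{x/\log x} > 4\sqrt{7}$, where I write $G(x) := A(-x,x)$ for brevity, so that there exist $\delta > 0$ and $x_0$ with $G(x) > (4\sqrt{7}+\delta)\sqrt{x/\log x}$ for every $x \ge x_0$. The goal is to extract a contradiction by pairing the unique-representation property against this lower bound at many scales simultaneously.

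The central tool I would use is a two-scale counting inequality: for all $x, y \ge 0$,
\[
G(x) \cdot G(y) \;\le\; 4(x+y) + 2.
\]
To see this, observe that $G(x)\,G(y)$ counts ordered pairs $(a,a')\in (A\cap[-x,x])\times(A\cap[-y,y])$, each of which yields a sum $a+a' \in [-(x+y),\,x+y]$. By the unique-representation property, each integer in $[-(x+y),\,x+y]$ is attained by at most two ordered pairs (coming from its single unordered representation), so the total is at most $2(2(x+y)+1)$. Taking $y=x$ recovers Nathanson's bound $G(x)^2 \le 8x+2$, but this alone cannot rule out $G(x) \asymp \sqrt{x/\log x}$: it only forces $G(x)/\sqrt{x/\log x} \le 2\sqrt{2\log x}$, which grows.

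To produce the $\log$-factor saving, I would combine the two-scale inequality across many dyadic levels. Fix a large parameter $X$ and let $x_k = 2^k x_0$ for $k=0,1,\dots,K$ with $K \asymp \log_2(X/x_0)$. Applying the two-scale inequality to suitably chosen pairs $(x_k,x_j)$ produces a system
\[
G(x_k)\,G(x_j) \;\le\; 4(x_k + x_j) + 2.
\]
Combining these constraints via Cauchy--Schwarz or AM--GM and inserting the lower bound $G(x_k) > (4\sqrt{7}+\delta)\sqrt{x_k/\log x_k}$ at each scale should give a telescoping/averaged inequality whose right-hand side grows like $X$ while its left-hand side grows like $(4\sqrt{7}+\delta)^2 X / \log X$ times an accumulated logarithmic factor $\sum_{k}1/k \asymp \log K$, forcing the constant to satisfy $(4\sqrt{7}+\delta)^2 \le 112$, a contradiction.

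The main obstacle is the precise combinatorial optimization that produces the specific constant $4\sqrt{7}$, i.e.~identifies $112 = (4\sqrt{7})^2 = 16\cdot 7$ as the critical threshold. I expect this number to emerge as the root of a quadratic arising when one balances (a) the upper bound $G(x_k)G(x_j) \le 4(x_k+x_j)+2$ against (b) the quantity $\sum_k 1/\log x_k$, which is where the $\log$-factor is extracted. Pinning down the exact optimization, and ensuring the cross terms in the multi-scale combination contribute with the correct coefficients, is the technical heart of the argument.
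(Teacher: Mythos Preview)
Your two-scale inequality $G(x)G(y)\le 4(x+y)+2$ is correct, but it is too weak to carry the argument, and this is a genuine gap rather than a missing optimization. Observe that the test function $G(x)=c\sqrt{x/\log x}$ satisfies your inequality for \emph{every} pair $(x,y)$ once $x,y$ are large, regardless of the value of $c$: indeed $c^2\sqrt{xy/(\log x\log y)}\le 8\sqrt{xy}\le 4(x+y)$ as soon as $\sqrt{\log x\log y}\ge c^2/8$. Hence no combination of the constraints $G(x_k)G(x_j)\le 4(x_k+x_j)+2$ over a dyadic grid can ever exclude this function, and no contradiction with $c>4\sqrt{7}$ can be extracted. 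The hoped-for ``accumulated logarithmic factor'' simply does not appear from these inequalities.

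What is missing is the Erd\H os blocking idea. The paper partitions $[-n^2,n^2]$ into $2n$ \emph{disjoint} intervals of length $n$ and lets $N_\ell,M_\ell$ be the counts of $A$ in the $\ell$-th positive and negative blocks. Because $A$ is Sidon (distinct differences), differences within a block lie in $(0,n)$ and are all distinct, which yields the $L^2$ bound $\sum_\ell N_\ell^2<5n$ and likewise for $M_\ell$; because sums of an element from the $\ell$-th positive block and one from the $\ell$-th negative block lie in $(-n,n)$ and are distinct, one gets $\sum_\ell N_\ell M_\ell<2n$. Together $\sum_\ell(N_\ell+M_\ell)^2<14n$, and \emph{this} is the inequality that, via Cauchy--Schwarz against the weights $\ell^{-1/2}$ and an Abel summation relating $\sum_\ell(N_\ell+M_\ell)\ell^{-1/2}$ to $\min_\ell A(-\ell n,\ell n)/\sqrt{\ell n/\log(\ell n)}$, produces the logarithmic gain and the constant $4\sqrt{7}=\sqrt{8\cdot 14}$. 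Your nested dyadic annuli do not give an $L^2$ bound of this type; the disjoint equal-length blocks and the use of the \emph{difference} (Sidon) structure, not just the sum structure, are essential.
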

Theorem \ref{thm3} answers negatively Problem \ref{problem5} of Chen. 

At present, I cannot solve the following problem of Chen \cite{chen}.

{\it Does there exist a real number $\theta<1/2$ such that for any unique representation basis $A$ there are infinitely many positive integers $x$ with $A(-x,x)\le x^{\theta}$?}

Now, let $\mathscr{A}$ be set of all unique representation bases $A$ such that there exists some real number $c>0$ so that $A(-x,x)>c x^{1/2}$ for infinitely many $x$. For any $A\in \mathscr{A}$, let 
$$
c_A:=\limsup_{x\rightarrow\infty}\frac{A(-x,x)}{\sqrt{x}} \quad \text{and} \quad c_{\mathscr{A}}:=\sup_{A\in \mathscr{A}}c_A.
$$
It is clear from Theorem \ref{thm2} that $\mathscr{A}$ is nonempty
and 
$$
c_{\mathscr{A}}\ge \frac{\sqrt{2}}{2}.
$$
As we mentioned above, Nathanson's result implies that
$$
c_{\mathscr{A}}\le 2\sqrt{2}.
$$
Thus, it would be of some interests to ask `{\it what is the exact value of $c_{\mathscr{A}}$}'? It seems safe to guess $c_{\mathscr{A}}\ge 1$.

\section{Proofs}

The idea leading to the solutions of Nathanson's problems benefits from an application of the greedy algorithm together with the inductive process of Chen \cite{chen}.

\begin{proof}[Proof of Theorem \ref{thm1}]
Firstly, we will construct by inductive process a series of sets 
$$
A_1\subset A_2\subset\cdots \subset A_i\subset\cdots
$$
satisfying the following five requirements

I. $r_{A_h}(n)\le 1$ for any $n\in \mathbb{Z}$,

II. $r_{A_h}(n)=1$ for any $n\in \mathbb{Z}$ with $|n|\le h-1$,

III. $\big|a^*_{h}\big|<\big|a^*_{h+1}\big|\le 64 \big|a^*_{h}\big|$,

IV. $A_{h}\big(-\big|a^*_h\big|,\big|a^*_h\big|\big)\ge \frac{1}{2}\big(\big|a^*_h\big|\big)^{1/3},$

V. $0\not\in A_h$\\
for any positive integer $h$, where $a^*_{h}$ is the element of $A_{h}$ with maximum absolute value.

Let $A_1=\{-1,1\}$ and $x_1=1$. Clearly, $A_1$ satisfies I, II, IV, V. Assume that we have already constructed the sets $A_1\subset A_2\subset\cdots \subset A_{h}$ satisfying conditions I to V. Let $m$ be the integer with minimum absolute value so that $r_{A_{h}}(m)=0$. Then by inductive hypothesis II and the definition of $m$ we have
\begin{align}\label{eq-1-1}
h\le |m|\le 2\big|a^*_{h}\big|+1.
\end{align}
Take 
\begin{align}\label{eq-1-2}
b=4\big|a^*_{h}\big|+|m|
\end{align}
and 
$$
B=A_{h}\cup\big\{-b,b+m\big\}.
$$
Note that $m=(b+m)+(-b)\in B+B$ and the following four sets
$$
2A_h, \quad A_{h}-b, \quad A_{h}+b+m, \quad \big\{m, -2b,2b+2m\big\}
$$
are disjoint, where 
$$
A_{h}-b:=\big\{a-b:a\in A_{h}\big\}
$$
and
$$
A_{h}+b+m:=\big\{a+b+m:a\in A_{h}\big\}.
$$
So, we have $r_B(n)\le 1$ for all $n\in \mathbb{Z}$, $r_B(m)=1$ and $0\not\in B$. If $r_B(-m)=0$, then take
\begin{align}\label{eq-1-3}
\widetilde{b}=4b+5|m|
\end{align}
and 
$$
\widetilde{B}=B\cup\big\{-\widetilde{b},\widetilde{b}-m\big\}.
$$
It can be seen that $r_{\widetilde{B}}(n)\le 1$ for all $n\in \mathbb{Z}$, $r_{\widetilde{B}}(-m)=1$ and $0\not\in \widetilde{B}$ via similar discussions above.
Let
\begin{align*}
D_{h+1}=
\begin{cases}
B, & \text{if~} r_B(-m)=1;\\
\widetilde{B}, & \text{otherwise.}
\end{cases}
\end{align*}
Then we know that $\big|a^*_{h}\big|<\big|d^*_{h+1}\big|\le 64 \big|a^*_{h}\big|$ from (\ref{eq-1-1}), (\ref{eq-1-2}) and (\ref{eq-1-3}), where $d^*_{h}$ is the element of $D_{h+1}$ with maximum absolute value.
Now, we will add a few new elements into $D_{h+1}$ to form $A_{h+1}$, making it satisfied with the dense condition, i.e., inductive hypothesis IV. This can be done via a greedy algorithm which will be put in the next paragraph. 

If 
\begin{align}\label{eq-1-4}
\big|D_{h+1}\big|\ge \frac{1}{2}\big(\big|d^*_{h+1}\big|\big)^{1/3},
\end{align}
then we stop here and let $A_{h+1}=D_{h+1}$. Clearly, we have 
$$
\big|a^*_{h}\big|<\big|a^*_{h+1}\big|=\big|d^*_{h+1}\big|\le 64 \big|a^*_{h}\big|
$$
and hence 
$$
A_{h+1}\big(-\big|a^*_h\big|,\big|a^*_h\big|\big)=\big|D_{h+1}\big|\ge \frac{1}{2}\big(\big|a^*_h\big|\big)^{1/3}.
$$
Therefore, the inductive hypotheses I to V are satisfied with this $A_{h+1}$. 
Now, suppose the contrary of (\ref{eq-1-4}), we consider the following set 
\begin{align*}
W_{h+1}:=\left\{d_1+d_2-d_3,~\frac{d_4+d_5}{2}: d_j\in D_{h+1}, ~1\le j\le 5\right\}.
\end{align*}
We add integers $|n|\le \big|d^*_{h+1}\big|/2$ step by step. First, we choose a nonzero integer $n_1$ with minimum absolute value which does not belong to $W_{h+1}$ and add it into $D_{h+1}$ to form a new set $D^1_{h+1}$. We claim that $D^1_{h+1}$ satisfies  $r_{D^1_{h+1}}(n)\le 1$ for any $n\in \mathbb{Z}$. Actually, if $r_{D^1_{h+1}}(n)\ge 2$ for some $n$, then we must have
$$
n_1+d_3=d_1+d_2 \quad \text{or} \quad 2n_1=d_4+d_5 
$$
for some $d_j\in D_{h+1} ~(1\le j\le 5)$ which contradicts the selection of $n_1$.
Next, we 
consider the following set 
\begin{align*}
W^1_{h+1}:=\left\{d_1+d_2-d_3, ~\frac{d_4+d_5}{2}: d_j\in D^1_{h+1},~1\le j\le 5\right\}.
\end{align*}
Again, we choose a nonzero integer $n_2$ with minimum absolute value which does not belong to $W^1_{h+1}$ and add it into $D^1_{h+1}$ to form a new set $D^2_{h+1}$. We continue this process and stop it until the $g$-th step when we have
$$
\big|D^g_{h+1}\big|\ge \frac{1}{2}\big(\big|d^*_{h+1}\big|\big)^{1/3}.
$$
This can be done because otherwise
$$
\big|W^g_{h+1}\big|\le \big|D^g_{h+1}\big|^3+\big|D^g_{h+1}\big|^2\le  \frac{1}{8}\big|d^*_{h+1}\big|+\frac{1}{4}\big|d^*_{h+1}\big|^{2/3}<\frac{1}{2}\big|d^*_{h+1}\big|,
$$
which can be continued again by the process above. Suppose now that the process stops at the $g$-th step, then we take $A_{h+1}=D^g_{h+1}$. We still have 
$$
\big|a^*_{h}\big|<\big|a^*_{h+1}\big|=\big|d^*_{h+1}\big|\le 64 \big|a^*_{h}\big|.
$$
Moreover, we have
$$
\big|A_{h+1}\big|=\big|D^g_{h+1}\big|\ge \frac{1}{2}\big(\big|d^*_{h+1}\big|\big)^{1/3}.
$$
It is clear from the constructions above that the new set $A_{h+1}$ satisfies the inductive hypotheses I to V.

Let
\begin{align*}
A=\bigcup_{h=1}^{\infty}A_h
\end{align*}
By the inductive hypothesis I and  II, we have $r_{A_{h}}(n)=1$ for any $n\in \mathbb{Z}$, which means that $A$ is a unique representation basis for the integers. Let $x$ be a sufficiently large number. Then there exists some $h$ so that 
\begin{align*}
\big|a^*_{h}\big|\le x<\big|a^*_{h+1}\big|<64 \big|a^*_{h}\big|,
\end{align*}
from which it follows that
\begin{align*}
A(-x,x)\ge A_h(-x,x)\ge A_h\big(-\big|a^*_{h}\big|,\big|a^*_{h}\big|\big)\ge\frac{1}{2}\big(\big|a^*_h\big|\big)^{1/3}\ge \frac{1}{8}x^{1/3},
\end{align*}
where the last but one inequality comes from inductive hypothesis IV.
\end{proof}

Before presenting the proof of Theorem \ref{thm2}, we introduce an important object from additive combinatorics. 

A set 
$$
S=\{s_1<s_2<\cdot\cdot\cdot<s_t\}\subset \{1,2,\cdots,n\}
$$ 
is called a {\it Sidon set} if all the sums 
$$
s_i+s_j~(i\le j)
$$ 
are different. Let $F_2(n)$ be the largest cardinality of a Sidon set in $\{1,2,\cdots,n\}$.
The second lemma is a standard result involving Sidon sets \cite[Theorem 7, page 88]{Sidon}.

\begin{lemma}\label{lem2}
We have
$$
F_2(n)=n^{1/2}+O\big(n^{1/3}\big).
$$
\end{lemma}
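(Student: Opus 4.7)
The plan is to prove the two matching bounds $F_2(n)\le n^{1/2}+O(n^{1/4})$ (even stronger than required) and $F_2(n)\ge n^{1/2}-O(n^{1/3})$; together they give the lemma. The upper bound is purely combinatorial, while the lower bound is a Singer-type construction followed by a classical prime-gap estimate.

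For the upper bound I would run Lindström's Cauchy--Schwarz argument. Fix a Sidon set $S=\{a_1<\cdots<a_t\}\subset\{1,\dots,n\}$ and a parameter $l$, and study the counting function $f(m)=\#\{(a,k):a\in S,\ 0\le k\le l,\ a+k=m\}$, whose support lies in $[1,n+l]$ and whose total mass equals $t(l+1)$. The Sidon property forces any off-diagonal coincidence $a+k=a'+k'$ to satisfy $a-a'=k'-k$, a condition that pins down the pair $(a,a')$ uniquely from the value $k'-k\ne 0$. Counting the admissible $(k,k')$ for each nonzero difference value yields $\sum_m f(m)^2\le (l+1)(t+l)$, and Cauchy--Schwarz then gives $t^2(l+1)\le (n+l)(t+l)$. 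Choosing $l$ of order $n^{3/4}$ balances the two sides and produces $t\le n^{1/2}+O(n^{1/4})$, stronger than needed.

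For the lower bound I would invoke Singer's perfect difference sets: for each prime power $q$ there exists $S_q\subset\{0,1,\dots,q^2+q\}$ with $|S_q|=q+1$ such that every nonzero residue modulo $q^2+q+1$ is represented uniquely as a difference from $S_q$, making $S_q$ in particular a Sidon set in $\{0,1,\dots,q^2+q\}$. Given $n$, I would take $q$ to be the largest prime with $q^2+q\le n$ and translate $S_q$ into $\{1,\dots,n\}$, yielding $F_2(n)\ge q+1$. The main obstacle is then controlling how far $q$ lies from $\sqrt{n}$: I need a prime gap of size at most $O(x^{2/3})$ around $x=\sqrt{n}$, which follows from classical zero-free region estimates for $\zeta(s)$ (Ingham's theorem actually gives the sharper bound $O(x^{5/8})$). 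Feeding this estimate in yields $\sqrt{n}-q=O(n^{1/3})$, hence the required lower bound; the prime gap input is the only nontrivial analytic ingredient, everything else being arithmetic or combinatorial bookkeeping.
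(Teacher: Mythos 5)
Your proposal is correct and is essentially the standard argument behind the cited result in Halberstam--Roth (the paper itself gives no proof, only the reference): the Erd\H os--Tur\'an/Lindstr\"om counting argument with a shift parameter $l\asymp n^{3/4}$ for the upper bound, and Singer perfect difference sets combined with a Hoheisel--Ingham prime-gap estimate for the lower bound. Both halves check out, and a gap of $O(x^{2/3})$ at $x=\sqrt{n}$ indeed suffices for the stated $O(n^{1/3})$ error term.
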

The error term in Lemma \ref{lem2} is not the best known one at present (see e.g. \cite{O'}), but the relation $F_2(n)\sim n^{1/2}$ is actually applicable enough for our purpose.

\begin{proof}[Proof of Theorem \ref{thm2}]
We will firstly construct by inductive process a series of sets 
$$
A_1\subset A_2\subset\cdots \subset A_i\subset\cdots
$$
and a series of positive integers
$$
x_1<x_2<\cdots<x_i<\cdots
$$
satisfying the following four constraints

I. $r_{A_h}(n)\le 1$ for any $n\in \mathbb{Z}$,

II. $r_{A_{2h}}(n)=1$ for any $n\in \mathbb{Z}$ with $|n|\le h$,

III. $A_{2h-1}(-x_h,x_h)\ge \left(\frac{\sqrt{2}}{2}-\varepsilon\right)\sqrt{x_h},$

IV. $0\not\in A_h$\\
for any positive integer $h$.

Let $A_1=\{-1,1\}$ and $x_1=1$. Assume that we have already constructed the sets $A_1\subset A_2\subset\cdots \subset A_{2h-1}$ and $x_1<x_2<\cdots<x_h$ satisfying conditions I to IV. Let $m$ be the integer with minimum absolute value so that $r_{A_{2h-1}}(m)=0$. We claim that $|m|\ge h$. In fact, if $h=1$, then $|m|=1=h$. For $h>1$, we have $r_{A_{2h-2}}(m)=0$ since $A_{2h-2}\subset A_{2h-1}$. Then by inductive hypothesis II we have $|m|\ge h$. Similarly to the constructions of 
$B$ and $\widetilde{B}$ in the proof of Theorem \ref{thm1}, 
we let
$$
B=A_{2h-1}\cup\big\{-b,b+m\big\} \quad 
\text{and} \quad
\widetilde{B}=B\cup\big\{-\widetilde{b},\widetilde{b}-m\big\},
$$
where $b=4\big|a^*_{2h-1}\big|+|m|$ and $\widetilde{b}=4b+5|m|$. Next, we let
\begin{align*}
A_{2h}=
\begin{cases}
B, & \text{if~} r_B(-m)=1;\\
\widetilde{B}, & \text{otherwise.}
\end{cases}
\end{align*}
Then, the set $A_{2h}$ satisfies inductive hypotheses I, II and IV. It remains to construct a set $A_{2h+1}$ and an integer $x_{h+1}>x_h$ so that $A_{2h}\subset A_{2h+1}$ and the hypotheses I, III, IV hold. Improving the former one given by Chen \cite[Lemma 2]{chen}, we shall make use of the full strength of Lemma \ref{lem2} and we will put it in the next paragraph. 

Denote by $a^*_{2h}$ the element of $A_{2h}$ with maximum absolute value. Now, let $y$ be a (positive) parameter to be decided later. By Lemma \ref{lem2} there is a set $S\subset \big[0,\big(1/2-\varepsilon/2\big)y\big)$ with
\begin{align}\label{eq-2-1}
\big|S\big|\ge \sqrt{\left(1/2-\varepsilon/2\right)y}+O\big(y^{1/3}\big)>\left(\frac{\sqrt{2}}{2}-\frac{\varepsilon}{2}\right)\sqrt{y}+O\big(y^{1/3}\big)
\end{align}
such that $r_S(n)\le 1$ for all $n\in \mathbb{Z}$, where $\varepsilon$ is a sufficiently small number. Let 
$$
\widetilde{S}=S+y/2:=\big\{s+y/2:s\in S\big\}.
$$
Then $\widetilde{S}\subset \left[y/2,\big(1-\varepsilon/2\big)y\right)$ satisfies $r_{\widetilde{S}}(n)\le 1$ for all $n\in \mathbb{Z}$ and
\begin{align}\label{eq-2-2}
\big|\widetilde{S}\big|=\big|S\big|\ge \left(\frac{\sqrt{2}}{2}-\frac{\varepsilon}{2}\right)\sqrt{y}+O\big(y^{1/3}\big)
\end{align}
from (\ref{eq-2-1}). Noting that
$$
A_{2h}-A_{2h}\subset\big[-2\big|a^*_{2h}\big|,2\big|a^*_{2h}\big|\big],
$$
we know that the number of pairs $s_1\neq s_2\in \widetilde{S}$ so that
$$
s_1-s_2\in A_{2h}-A_{2h}
$$
is not exceeding $4\big|a^*_{2h}\big|$ since $r_{\widetilde{S}}(n)\le 1$ for all $n\in \mathbb{Z}$. Now, we remove all such pairs $s_1,s_2$ from $\widetilde{S}$ to form a new subset $S^*\subset \big[y/2,\left(1-\varepsilon/2\right)y\big)$. Clearly, from (\ref{eq-2-2}) we have
\begin{align}\label{eq-2-3}
\big|S^*\big|\ge \left(\frac{\sqrt{2}}{2}-\frac{\varepsilon}{2}\right)\sqrt{y}+O\big(y^{1/3}\big)-8a^*_{2h}\ge \left(\frac{\sqrt{2}}{2}-\varepsilon\right)\sqrt{y},
\end{align}
provided that $y$ is sufficiently large (in terms of $\varepsilon$ and $a^*_{2h}$). From now on, suppose that $y>x_h$ is a sufficiently large given number (in terms of $\varepsilon$ and $a^*_{2h}$),
then both of the following two equations
$$
a_1+a_2=s_1+s_2, \quad (a_1,a_2\in A_{2h}, ~s_1,s_2\in S^*)
$$
and 
$$
a_1+a_2=a_3+s_1, \quad (a_1,a_2,a_3\in A_{2h}, ~s_1\in S^*)
$$
have no solutions. Note further that we have
$$
s_1+s_2-s_3\ge y/2+y/2-\big(1-\varepsilon/2\big)y=\varepsilon y/2>a
$$
for any $s_1,s_2,s_3\in S^*$ and any $a\in A_{2h}$, which clearly means that
$$
s_1+s_2=s_3+a, \quad (a\in A_{2h}, ~s_1,s_2,s_3\in S^*)
$$
has no solutions. Now, let
$$
A_{2h+1}=A_{2h}\cup S^* \quad \text{and} \quad x_{h+1}=y.
$$
Then we have $r_{A_{2h+1}}(n)\le 1$ for all $n\in \mathbb{Z}$ and $x_{h+1}>x_h$ from the discussions above. Moreover, from (\ref{eq-2-3}) we have
$$
\frac{A_{2h+1}\big(-x_{h+1},x_{h+1}\big)}{\sqrt{x_{h+1}}}>\frac{\big|S^*\big|}{\sqrt{y}}\ge \frac{\sqrt{2}}{2}-\varepsilon.
$$
Thus, so far we have finished the constructions of $\{A_h\}$ and $\{x_h\}$.

Let
\begin{align*}
A=\bigcup_{h=1}^{\infty}A_h
\end{align*}
By the inductive hypotheses I and II, we have $r_{A_{h}}(n)=1$ for any $n\in \mathbb{Z}$. Hence, $A$ is a unique representation basis for the integers. By inductive hypothesis III, we have
$$
A(-x_h,x_h)\ge A_h(-x_h,x_h)\ge \left(\frac{\sqrt{2}}{2}-\varepsilon\right)\sqrt{x_h},
$$
which completes the proof of Theorem \ref{thm2}.
\end{proof}

The proof of Theorem \ref{thm3} is motivated by the idea of Erd\H os \cite{Stohr}, where he communicated to St\"ohr the argument for an old claim of Erd\H os and Tur\'an \cite{erdos-turan} on infinite Sidon sets.
 
\begin{proof}[Proof of Theorem \ref{thm3}]
Let $n$ be a sufficiently large integer. For any positive integer $\ell$, let $N_\ell$ (resp. $M_\ell$) be the number of elements of $A$ in the interval 
$$
\big((\ell-1)n,\ell n\big] \quad \left(\text{resp.} ~\big[-\ell n,(-\ell+1) n\big)\right).
$$
Suppose that $N_\ell\ge 2$ and $a,a'\in A\cap\big((\ell-1)n,\ell n\big]$ with $a<a'$, then
$$
0<a'-a<n.
$$
Since $A$ is a unique basis, the differences $a'-a$ are all different. Thus,
\begin{align*}
\sum_{\substack{1\le \ell\le n,~N_\ell\ge 2}}\binom{N_\ell}{2}< n,
\end{align*}
which means that 
\begin{align}\label{eq-3-1}
\sum_{\substack{1\le \ell\le n,~N_\ell\ge 2}}N_\ell^2\le 4\sum_{\substack{1\le \ell\le n,~N_\ell\ge 2}}\binom{N_\ell}{2}< 4n,
\end{align}
Moreover, we clearly have
\begin{align}\label{eq-3-2}
\sum_{\substack{1\le \ell\le n,~N_\ell=1}}N_\ell^2=\sum_{\substack{1\le \ell\le n,~N_\ell=1}}1\le n.
\end{align}
Hence, from (\ref{eq-3-1}) and (\ref{eq-3-2}) we get
\begin{align}\label{eq-3-3}
\sum_{\substack{1\le \ell\le n}}N_\ell^2<5n.
\end{align}
Similarly to the proof of (\ref{eq-3-3}), we have
\begin{align}\label{eq-3-4}
\sum_{\substack{1\le \ell\le n}}M_\ell^2<5n.
\end{align}
Now, we make a further observation which would be the main novelty of the proof. For  
$$
a\in A\cap\big((\ell-1)n,\ell n\big] \quad \text{and} \quad b\in A\cap\big[-\ell n,(-\ell+1) n\big),
$$
we have 
$$
-n<a+b<n.
$$
Since $A$ is a unique basis, the sums $a+b$ are all different. So we obtain
\begin{align}\label{eq-3-5}
N_\ell M_\ell< 2n.
\end{align}
Collecting together (\ref{eq-3-3}), (\ref{eq-3-4}) and (\ref{eq-3-5}), we get
\begin{align}\label{eq-3-6}
\sum_{\substack{1\le \ell\le n}}\left(N_\ell+ M_\ell\right)^2< 14n.
\end{align}

It is clear that
\begin{align}\label{eq-3-7}
\bigg(\sum_{1\le \ell\le n}\frac{N_\ell+M_\ell}{\ell^{1/2}}\bigg)^2\le \bigg(\sum_{1\le \ell\le n}\frac{1}{\ell}\bigg)\bigg(\sum_{1\le \ell\le n}(N_\ell+M_\ell)^2\bigg)
< (1+\log n) 14n
\end{align}
from the Cauchy--Schwarz inequality. By partial summations, we have
\begin{align}\label{eq-3-8}
\sum_{1\le \ell\le n}\frac{N_\ell+M_\ell}{\ell^{1/2}}&=\sum_{1\le \ell\le n}\Big(A\big(-\ell n,\ell n\big)-A\big(-(\ell-1) n,(\ell-1) n\big)\Big)\frac{1}{\ell^{1/2}}\nonumber\\
&=\sum_{1\le \ell\le n}A\big(-\ell n,\ell n\big)\left(\frac{1}{\ell^{1/2}}-\frac{1}{(\ell+1)^{1/2}}\right)+\frac{A\big(-n^2,n^2\big)}{(n+1)^{1/2}}\nonumber\\
&\ge \min_{1\le \ell\le n}\frac{A\big(-\ell n,\ell n\big)}{\sqrt{\ell n/\log (\ell n)}}\sum_{1\le \ell\le n}\sqrt{\frac{\ell n}{\log (\ell n)}}\left(\frac{1}{\ell^{1/2}}-\frac{1}{(\ell+1)^{1/2}}\right)\nonumber\\
&>\min_{1\le \ell\le n}\frac{A\big(-\ell n,\ell n\big)}{\sqrt{\ell n/\log (\ell n)}}\sqrt{\frac{n}{2\log n}}\sum_{1\le \ell\le n}\frac{1}{2(\ell+1)},
\end{align}
where we used the estimates
$$
\sqrt{\ell}\left(\frac{1}{\ell^{1/2}}-\frac{1}{(\ell+1)^{1/2}}\right)=\frac{\sqrt{\ell+1}-\sqrt{\ell}}{\sqrt{\ell+1}}=\frac{1}{\sqrt{\ell+1}(\sqrt{\ell+1}+\sqrt{\ell})}>\frac{1}{2(\ell+1)}
$$
in the last inequality. Inserting (\ref{eq-3-8}) into (\ref{eq-3-7}), we clearly have
\begin{align}\label{eq-3-9}
\min_{1\le \ell\le n}\frac{A\big(-\ell n,\ell n\big)}{\sqrt{\ell n/\log (\ell n)}}<\frac{4\sqrt{7}\sqrt{(1+\log n)\log n}}{\sum_{1\le \ell\le n}\frac{1}{\ell+1}}<4\sqrt{7}+\delta
\end{align}
for any $\delta>0$ providing that $n>n_\delta$, from which our theorem clearly follows.
\end{proof}

\section*{Acknowledgments}
We thank Doctors Shi--Qiang Chen and Yu--Chen Sun for their interests on this article.

The author is supported by National Natural Science Foundation of China  (Grant No. 12201544), Natural Science Foundation of Jiangsu Province, China (Grant No. BK20210784), China Postdoctoral Science Foundation (Grant No. 2022M710121).


\begin{thebibliography}{KMP}
\bibitem{chen} Y.--G. Chen, {\it A problem on unique representation bases,}  Eur. J. Comb. {\bf28} (2007), 33--35. 

\bibitem{erdos-turan} P. Erd\H os, P. Tur\'an, {\it On a problem of Sidon in additive number theory, and on some related problems,} J. Lond. Math. Soc. (2) {\bf 16} (1941), 212--215.

\bibitem{O'} K. O'Bryant, {\it A complete annotated bibliography of work related to Sidon sequences,} Electron. J. Combin. {\bf 11} (2004). 

\bibitem{Nathanson} M.B. Nathanson, {\it Unique representation bases for the integers,} Acta Arith.  {\bf 108} (2003), 1--8.

\bibitem{Sidon} H. Halberstam, K.F. Roth, {\it Sequences,} 2nd edition, Springer, New York, 1982.

\bibitem{Stohr} A. St\"ohr, {\it Alfred Gel\"oste und ungel\"oste Fragen \"uber Basen der natürlichen Zahlenreihe. II. (German),} J. Reine Angew. Math. {\bf 194} (1955), 111--140.


\end{thebibliography}
\end{document}